\let\vec\mathbf
\DeclarePairedDelimiter\norm{\lVert}{\rVert}
\DeclarePairedDelimiter\abs{\lvert}{\rvert}
\newcommand{\vkk}{\mathcal{V}^{K,k}}
\newcommand{\ekk}{\mathcal{E}^{K,k}}
\newcommand{\pkk}{\mathcal{P}^{K,k}}
\newcommand{\vk}{\mathcal{V}^{k}}
\newcommand{\ek}{\mathcal{E}^{k}}
\newcommand{\pk}{\mathcal{P}^{k}}
\theoremstyle{thmstyleone}%
\newtheorem{theorem}{Theorem}%  meant for continuous numbers
\newtheorem{proposition}[theorem]{Proposition}% 
\newtheorem{lemma}[theorem]{Lemma}% 
\theoremstyle{thmstyletwo}%
\newtheorem{remark}{Remark}%
\theoremstyle{thmstylethree}%
\begin{document}

\title[When rational functions meet virtual elements: The lightning Virtual Element Method]{When rational functions meet virtual elements:\\The lightning Virtual Element Method}

%%=============================================================%%
%% Prefix	-> \pfx{Dr}
%% GivenName	-> \fnm{Joergen W.}
%% Particle	-> \spfx{van der} -> surname prefix
%% FamilyName	-> \sur{Ploeg}
%% Suffix	-> \sfx{IV}
%% NatureName	-> \tanm{Poet Laureate} -> Title after name
%% Degrees	-> \dgr{MSc, PhD}
%% \author*[1,2]{\pfx{Dr} \fnm{Joergen W.} \spfx{van der} \sur{Ploeg} \sfx{IV} \tanm{Poet Laureate} 
%%                 \dgr{MSc, PhD}}\email{iauthor@gmail.com}
%%=============================================================%%

\author[1]{\fnm{Manuel Luigi} \sur{Trezzi}}\email{manuelluigi.trezzi01@universitadipavia.it}

\author*[2]{\fnm{Umberto} \sur{Zerbinati}}\email{umberto.zerbinati@maths.ox.ac.uk}

\affil[1]{\orgdiv{Dipartimento di Matematica}, \orgname{Uiniversità di Pavia}, \orgaddress{\street{Via Ferrata 5}, \city{Pavia}, \postcode{27100}, \country{Italy}}}

\affil[2]{\orgdiv{Mathematical Institute}, \orgname{University of Oxford}, \orgaddress{\street{Andrew Wiles Building}, \city{Oxford}, \postcode{OX2 6GG}, \country{United Kingdom}}}

%%==================================%%
%% sample for unstructured abstract %%
%%==================================%%

\abstract{
We propose a lightning Virtual Element Method that eliminates the stabilisation term by actually computing the virtual component of the local VEM basis functions using a lightning approximation.
In particular, the lightning VEM approximates the virtual part of the basis functions using rational functions with poles clustered exponentially close to the corners of each element of the polygonal tessellation.
This results in two great advantages. First, the mathematical analysis of a priori error estimates is much easier and essentially identical to the one for any other non-conforming Galerkin discretisation.
Second, the fact that the lightning VEM truly computes the basis functions allows the user to access the point-wise value of the numerical solution without needing any reconstruction techniques.
The cost of the local construction of the VEM basis is the implementation price that one has to pay for the advantages of the lightning VEM method, but the embarrassingly parallelizable nature of this operation will ultimately result in a cost-efficient scheme almost comparable to standard VEM and FEM.
}

\keywords{virtual element method, lightning Laplace, rational functions, partial differential equations}

%%\pacs[JEL Classification]{D8, H51}

%%\pacs[MSC Classification]{35A01, 65L10, 65L12, 65L20, 65L70}

\maketitle

\section{Introduction}
Since its introduction in \cite{VEMBasic}, the Virtual Element Method (VEM) has been recognised as a valuable tool for the solution of partial differential equations (PDEs).
One of the key advantages of the VEM is that it allows any type of polygonal meshes, thanks to the introduction of a virtual component in the local basis functions.
We will discuss this aspect in a later section.
Curved meshes can also be used more naturally than iso-parametric finite elements \cite{CurvedVEM}.
Furthermore, given the rising interest in structure-preserving discretisation, it is worth mentioning that the VEM allows the mimicry of many interesting physical structures that arise at the level of the continuous PDE \cite{VEMStokes}.
For example, the VEM, unlike the standard finite element method (FEM), can be used to construct arbitrary low-order divergence-free $H^1$ conforming discretisations for Stokes flow \cite{VEMStokes}, without any restriction on the mesh.
The sophisticated mathematical infrastructure that allows the proof of accurate a priori error estimates, which are a key advantage of the FEM over other numerical schemes, can also be applied with some adjustments to the VEM.
Another feature of the VEM, perhaps the one of greatest importance, is the ability to easily produce discretisations with a high order of continuity, a property that is well known to be a weakness of the classical FEM.
It is also worth mentioning that VEM accuracy can be improved with the $p$ and $hp$ versions of VEM introduced in \cite{hpVEM}, clearly reflecting the intimate relation between the FEM and VEM.

The advantages just mentioned lead to the application of the VEM to a wide variety of problems from linear elasticity \cite{LavadinaEl,BMEl,LovadinaEl2}, fluid-dynamics \cite{Inc1,Inc2,Inc3}, fourth-order problems \cite{F1,F2,F3}, acoustic wave propagation and Helmholtz problem \cite{Wave1,Perugia}, and  magnetostatics \cite{M1,M2}.
While the mathematical infrastructure is what makes the VEM shine and reveals the advantages listed above, the challenges of the VEM are the practical aspects of its implementation.
The true novelty behind the VEM is the use of the so-called projection operators that allow assembling stiffness and mass matrices without the necessity of computing the virtual component of the basis functions.
Resorting to projection operators results in a lack of coercivity that requires the introduction of an additional stabilisation term in the weak formulation of the problem.

The lightning VEM proposed here eliminates the stabilisation term by actually computing, in an extremely efficient manner, the virtual component of the local VEM basis functions.
In particular, the lightning VEM approximates the virtual part of the basis functions using rational functions with poles clustered exponentially close to the corners of each element of the polygonal tessellation.
It is worth mentioning that in the literature other ideas have been proposed in order to get rid of the stabilisation term \cite{BerroneE2VEM,BerroneE2VEM2,EigLowOrder,AdaptiveE2VEM}.
The key difference between the lightning VEM and these methods is the fact that the lightning VEM does not make use of any projection operators.
Therefore, the mathematical analysis of a priori error estimates is much easier and essentially identical to that for any other non-conforming Galerkin discretisation.

Furthermore, the fact that the lightning VEM truly computes the basis function allows the user to access pointwise values of the numerical solution without the need for any reconstruction technique.
This is particularly appealing considering that a common reconstruction technique is based on polynomial interpolation and may require the triangulation of the polygonal mesh.

The local construction of the VEM basis is the implementation price that one has to pay for the advantages of the lightning VEM method, but as we will see the embarrassingly parallelizable nature of this operation will ultimately result in a cost-efficient scheme compared to standard VEM and FEM.

Before diving in the core aspects of the paper, the authors would like to stress the difference between the meaning of the word ``virtual'' in the context of the VEM and in the context of the lightning VEM.
In the first case``virtual'' refers to the fact that we have no point-wise knowledge of the basis functions in the interior of an element.
In the lightning VEM we compute a set of basis functions, for which we can access point-wise values, and ``virtual'' refers to the fact that we are approximating the standard virtual element space.

\section{Virtual Element Method} \label{sec:VEM}
For the sake of simplicity, we focus our attention on the Laplace problem.
The VEM has been applied to a large number of equations but we want to keep the focus on the simplest scenario. Given a polygonal open, bounded domain $\Omega\subset \mathbb{R}^2$ with boundary $\Gamma$, we consider the problem of finding a function $u:\Omega \to \mathbb R$ such that
\begin{equation} \label{eq:lap}
\left\{
\begin{aligned}
- \Delta u &= f \quad \text{in }\Omega \, , \\
u &= 0 \quad \text{on }\Gamma \, ,
\end{aligned}
\right .
\end{equation}
where $f \in L^2(\Omega)$ is the load term. It is well known that the standard weak formulation of \eqref{eq:lap} reads as
\begin{equation} \label{eq:lap-weak}
\left\{
\begin{aligned}
&\text{find $u \in V\coloneqq H^1_0(\Omega)$ such that: }\\
&a(u,v) = \langle f , v \rangle \quad \forall v \in H_0^1(\Omega) \, ,
\end{aligned}
\right .
\end{equation}
where the bilinear form $a(\cdot,\cdot): V \times V \to \mathbb R$ is defined as
\begin{equation}\label{eq:a-def}
\forall u,v \in V \, , \quad a(u,v) \coloneqq \int_\Omega \nabla u \cdot \nabla v \, .
\end{equation}
\subsection{The local space} We decompose the domain $\Omega$ in a tessellation $\mathcal{T}_h$ by a finite number of non-overlapping convex polygons $K$.
In particular, we assume that there exists a positive constant $\rho$ such that every element $K \in \mathcal{T}_h$ is star-shaped with respect to a ball of a radius greater or equal than $ \rho h_K$, where $h_K$ denotes the diameter of the element $K$.
Let $k \geq 1$ be the ``order" of the method. For every element $K \in \mathcal{T}_h$ we define the local virtual element space as
\begin{equation} \label{eq:discrete-space}
V_h^k(K) \coloneqq
\left \{
v_h \in H^1(K) \, \text{ such that } \,
\Delta v_h \in \mathbb P_{k-2}(K) \, \text{ and } \, 
v_h \in \mathbb B_K(\partial K)
\right \},
\end{equation}
where 
\begin{equation}
\mathbb B_k (\partial K) \coloneqq 
\left \{
 v_h \in \mathcal{C}^0(\partial K) \, \text{ such that } \,
 \forall e \in \partial K , \,
 v_h |_e \in \mathbb P_k(e) 
\right \},
\end{equation}
where $e$ denotes an edge of $\partial K$ and we have denoted $\mathbb P_{-1} \coloneqq \{0\}$. One can check that the following quantities represent a set of degrees of freedom for the space $V_h^k(K)$:
\begin{enumerate}
\item $\vkk$: the pointwise values of $v_h$ at the vertexes of the polygon $K$,
\item $\ekk$: the values of $v_h$ at $k-1$ internal points of a Gauss-Lobatto quadrature for every edge $e \in \partial K$,
\item $\pkk$: the moments $\dfrac{1}{| K |} \int_K v_h \, m_{\alpha\beta} \, {\rm d} K\, $, $\forall m_{\alpha\beta} \in \mathcal{M}_{k-2}(K)$ where $\mathcal{M}_{k-2}(K)$ is the set of monomials defined as
\begin{equation}
\label{eq:internal-dofs}
\mathcal{M}_{k-2}(K) \coloneqq \left\{
m_{\alpha\beta} \coloneqq \left( \dfrac{x - x_K}{h_K} \right)^\alpha
\left( \dfrac{y - y_K}{h_K} \right)^\beta
 \ \alpha,\beta\in\mathbb N\, , \alpha + \beta \leq k - 2
\right\}.
\end{equation}
\end{enumerate}
Details of the proof are in \cite{VEMBasic}. 
A graphic representation of the degrees of freedom is given in Figure \ref{f:dofs}. 
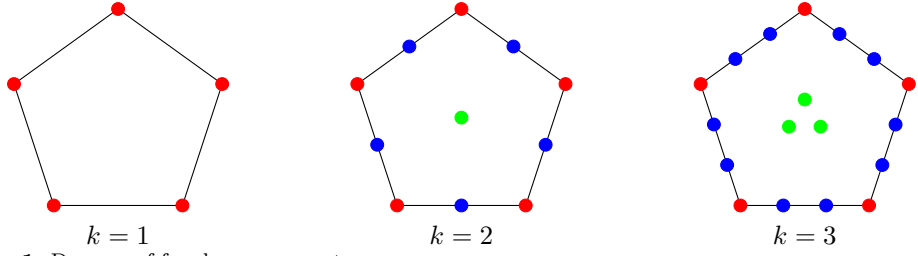
\begin{figure}\label{f:dofs}

\begin{minipage}{0.3\textwidth}
\centering
\begin{tikzpicture}[scale = 0.8]
\coordinate (A) at (90:1.8);
\coordinate (B) at (162:1.8);
\coordinate (C) at (234:1.8);
\coordinate (D) at (306:1.8);
\coordinate (E) at (378:1.8);
\draw (A) -- (B) -- (C) -- (D) -- (E) -- (A);
\draw[red, fill] (A) circle (3pt);
\draw[red, fill] (B) circle (3pt);
\draw[red, fill] (C) circle (3pt);
\draw[red, fill] (D) circle (3pt);
\draw[red, fill] (E) circle (3pt);
\end{tikzpicture} \\
$k=1$
\end{minipage}
\hfill
\begin{minipage}{0.3\textwidth}
\centering
\begin{tikzpicture}[scale = 0.8]
\coordinate (A) at (90:1.8);
\coordinate (B) at (162:1.8);
\coordinate (C) at (234:1.8);
\coordinate (D) at (306:1.8);
\coordinate (E) at (378:1.8);

\path (A) +(216:1.058) coordinate (A1);
\path (B) +(288:1.058) coordinate (B1);
\path (C) +(0:1.058) coordinate (C1);
\path (D) +(72:1.058) coordinate (D1);
\path (E) +(144:1.058) coordinate (E1);

\coordinate (01) at (0,0);

\draw (A) -- (B) -- (C) -- (D) -- (E) -- (A);

\draw[red, fill] (A) circle (3pt);
\draw[red, fill] (B) circle (3pt);
\draw[red, fill] (C) circle (3pt);
\draw[red, fill] (D) circle (3pt);
\draw[red, fill] (E) circle (3pt);

\draw[blue, fill] (A1) circle (3pt);
\draw[blue, fill] (B1) circle (3pt);
\draw[blue, fill] (C1) circle (3pt);
\draw[blue, fill] (D1) circle (3pt);
\draw[blue, fill] (E1) circle (3pt);

\draw[green, fill] (01) circle (3pt);
\end{tikzpicture} \\
$k=2$
\end{minipage}
\hfill
\begin{minipage}{0.3\textwidth}
\centering
\begin{tikzpicture}[scale=0.8]
\coordinate (A) at (90:1.8);
\coordinate (B) at (162:1.8);
\coordinate (C) at (234:1.8);
\coordinate (D) at (306:1.8);
\coordinate (E) at (378:1.8);

\path (A) +(216:0.705) coordinate (A1);
\path (B) +(288:0.705) coordinate (B1);
\path (C) +(0:0.705) coordinate (C1);
\path (D) +(72:0.705) coordinate (D1);
\path (E) +(144:0.705) coordinate (E1);

\path (A1) +(216:0.705) coordinate (A2);
\path (B1) +(288:0.705) coordinate (B2);
\path (C1) +(0:0.705) coordinate (C2);
\path (D1) +(72:0.705) coordinate (D2);
\path (E1) +(144:0.705) coordinate (E2);

\coordinate (O1) at (90:0.3);
\coordinate (O2) at (210:0.3);
\coordinate (O3) at (330:0.3);

\draw (A) -- (B) -- (C) -- (D) -- (E) -- (A);

\draw[red, fill] (A) circle (3pt);
\draw[red, fill] (B) circle (3pt);
\draw[red, fill] (C) circle (3pt);
\draw[red, fill] (D) circle (3pt);
\draw[red, fill] (E) circle (3pt);

\draw[blue, fill] (A1) circle (3pt);
\draw[blue, fill] (B1) circle (3pt);
\draw[blue, fill] (C1) circle (3pt);
\draw[blue, fill] (D1) circle (3pt);
\draw[blue, fill] (E1) circle (3pt);

\draw[blue, fill] (A2) circle (3pt);
\draw[blue, fill] (B2) circle (3pt);
\draw[blue, fill] (C2) circle (3pt);
\draw[blue, fill] (D2) circle (3pt);
\draw[blue, fill] (E2) circle (3pt);

\draw[green, fill] (O1) circle (3pt);
\draw[green, fill] (O2) circle (3pt);
\draw[green, fill] (O3) circle (3pt);
\end{tikzpicture} \\
$k=3$
\end{minipage}
\caption{Degrees of freedom on a pentagon.}
\end{figure}
Thanks to the definition of the degrees of freedom, it is possible to construct the projection operator $\Pi^{\nabla, K}_k : V_h^k(K) \to \mathbb P_k(K)$ defined as 
\begin{equation*}
\label{eq:Pn_k^K}
\left\{
\begin{aligned}
& \int_K \nabla \, p_k \cdot \nabla ( v_h - \, {\Pi}_{k}^{\nabla,K} v_h)\, {\rm d} K = 0 \quad  \text{for all $v_h \in V_h^k(K)$ and  $p_k \in \mathbb P_k(K)$,} \\
& \int_{\partial K}(v_h - {\Pi}_{k}^{\nabla, K}  v_h) \, {\rm d}s= 0 \, .
\end{aligned}
\right.
\end{equation*}
Indeed, thanks to integration by parts, we note that
\begin{equation}
\int_K \nabla \, p_k \cdot \nabla \, v_h
= 
- \int_K v_h \, \Delta p_k \, {\rm d} K
+
\int_{\partial K} \dfrac{p_k}{\partial n} \, v_h \, {\rm d} s.
\end{equation}
The first integral is known thanks to $\pkk$, for the second one we use $\ekk$ and $\vkk$.
The operator $\Pi^{\nabla,K}$ is an orthogonal projection into the space of polynomials with respect to the $H^1$ seminorm. This is the cornerstone around we can construct a discretisation of the bilinear form $a(\cdot, \cdot)$.
The global space is obtained by gluing together the local spaces
\begin{equation*}
\label{eq:vem global space}
V_h^k(\mathcal{T}_h) = \{v_h \in V \quad \text{s.t.} \quad v_h|_K \in V_h(K) \quad \text{for all $K \in \mathcal{T}_h$} \} \, ,
\end{equation*}
therefore obtaining a space characterized by the following degrees of freedom:
\begin{enumerate}
    \item $\vk$: the values of $v_h$ at the vertices;
    \item $\ek$: the values of $v_h$ at $k-1$ points on each edge $e$;
    \item $\pk$: the moments up to order $k-2$ for each element $E\in\mathcal{T}_h$.
\end{enumerate}

\subsection{The discrete problem} First, we decompose the global bilinear form into local contributions
\begin{equation}
a(u,v) = \sum_{K \in \mathcal{T}_h} a^K(u,v) \, .
\end{equation}
We point out that, except for a particular structures of the element $K$, we don't have an analytic expression for all the functions in $V_h^k(K)$. Hence, given two generic virtual functions, we are not able to compute the quantity
\begin{equation}
a^K(u_h,v_h) = \int_K \nabla u_h \cdot \nabla v_h \, {\rm d} K \, , \qquad u_h,v_h \in V_h^k(\mathcal{T}_h)
\end{equation}
We would like to construct a discrete bilinear form $a_h^K: V_h^k(K) \times V_h^k(K) \to \mathbb R$ that is computable for all the virtual functions and acts as a discrete counterpart of $a^K(\cdot, \cdot)$. The idea is to split the virtual functions as
\begin{equation}
v_h = \Pi^{\nabla, K}_k v_h + (I - \Pi^{\nabla, K}_k) v_h \, .
\end{equation}
Thanks to this choice, the bilinear form is split as
\begin{equation}
\begin{aligned}
a^K(u_h,v_h) &= a^K(\Pi^{\nabla, K}_k u_h,\Pi^{\nabla, K}_kv_h) + a^K((I - \Pi^{\nabla, K}_k) u_h, (I - \Pi^{\nabla, K}_k) v_h) \\
&+ a^K(\Pi^{\nabla, K}_k u_h, (I - \Pi^{\nabla, K}_k) v_h) + a^K((I - \Pi^{\nabla, K}_k) u_h,\Pi^{\nabla, K}_k v_h) \, .
\end{aligned}
\end{equation}
Thanks to orthogonality of $\Pi^{\nabla,K}_k$, the last two terms are equal to zero. The first term involves only polynomials hence is computable. This term is known in the VEM literature as the consistency term. The only thing that remains to be done is to handle the second term. The idea in the VEM framework is to replace it with a computable bilinear form $\mathcal{S}^K(\cdot,\cdot):V_h^k(K) \times V_h^k(K) \to \mathbb R$ that satisfies 
\begin{equation}
\label{eq:sEh}
\alpha_*|v_h|_{1,E}^2 \leq \mathcal S(v_h, v_h) \leq \alpha^* |v_h|_{1,E}^2 \, ,
\qquad \text{for all $v_h \in {\rm Ker}(\Pi^{\nabla,K}_k)$} \, ,
\end{equation}
for two positive uniform constants $\alpha_*$ and $\alpha^*$. This term is called the stability term. We define the bilinear form
\begin{equation}
a_h^K(u_h,v_h) \coloneqq a^K( \Pi^{\nabla,K}_k u_h, \Pi^{\nabla,K}_k v_h ) + \mathcal{S}^K( (I - \Pi^{\nabla,K}_k) u_h , (I - \Pi^{\nabla,K}_k) v_h) \, .
\end{equation}
The global bilinear form is obtained summing all the local contributions
\begin{equation}
a_h(u_h,v_h) \coloneqq \sum_{K \in \mathcal{T}_h} a^K_h(u_h, v_h) 
\end{equation}
It remains to discretise the load term. A standard choice is the following procedure:
\begin{enumerate}
    \item if $k \geq 2$, we replace $f$ with $f_h$ defined locally as the $L^2$-orthogonal projection into the space $\mathbb P_{k-2}(K)$. In detail, we consider
    \begin{equation}
    \langle f_h, v_h \rangle = \sum_{K \in \mathcal{T}_h} \int_K f_h \, v_h {\rm d}K = \sum_{K \in \mathcal{T}_h} \int_K f \, \Pi^{0,E}_{k-2} v_h {\rm d}K \, ;
    \end{equation}
    \item if $k=1$, we replace $f$ with a piecewise constant and define
    \begin{equation}
    \langle f_h, v_h \rangle = \sum_{K\in\mathcal{T}_h} \int_K P_0^K f \, \overline v_h {\rm d}K \, 
    \end{equation}
    where
    \begin{equation}
    \overline v_h = \dfrac{1}{n} \sum_{i=1}^n v_h(V_i) \, ,\quad V_i = \text{vertices of $K$}\, .
    \end{equation}
\end{enumerate}
The discrete problem reads as
\begin{equation} \label{eq:lapWeakDisct}
\left\{
\begin{aligned}
&\text{find $u_h \in V_h^k(\Omega_h)$ such that: }\\
&a_h(u_h,v_h) = \langle f_h , v_h \rangle \quad \forall v_h \in V_h^k(\Omega_h) \, .
\end{aligned}
\right .
\end{equation}

\begin{remark}
  If we desire an $H^1$ conforming discretisation the most common choice of virtual operator is the negative Laplacian as in \eqref{eq:discrete-space}.
The well-posedness of the local virtual element space is guaranteed even if we choose any other elliptic operator with sufficiently smooth coefficients.
Yet to recover the, optimal approximation property, it will be convenient also to assume that $\mathbb{P}_{k}$ is a subset of the co-domain of chosen the operator when applied to the space $\mathbb{P}_{k-2}$ with the constraint of having boundary data in $\mathbb{P}_k$.
A common misunderstanding when first approaching the VEM is to think in terms of Trefftz methods, i.e. to assume that the operator appearing in \eqref{eq:discrete-space} has to be the same as the one appearing in \eqref{eq:lap}.
We direct the reader interested in Trefftz methods to \cite{PaulTreffetz,Trefftz}.
We should think of the operator appearing in \eqref{eq:discrete-space} only in terms of the conformity we desire.

\end{remark}
\begin{remark}
  \label{rmk:polyLaplacian}
  In general, if we require an $H^k$ conforming discretisation the most common choice of operator in \eqref{eq:discrete-space} would be the polylaplacian $-\Delta^k$.
  Of course, we need to modify slightly our definition of local VEM space to accommodate the fact that we have to prescribe not only the value of the solution at the boundary but also the first $k-1$ normal derivatives.
  More detail on this topic can be found in \cite{F1,F2,F3,Polylapcian,PolyLaplacianPt2}.
\end{remark}

\begin{remark}
    An important observation is that the VEM only provides the value of the degrees of freedom as result. If one is interested in the value of the discrete solution outside of $\mathcal{V}^k$ and $\mathcal{E}^k$ a reconstruction technique has to be used.
    A common reconstruction technique is to triangulate each polygonal element and perform a linear interpolation using the nodal degrees of freedom. We will see later on that the lightning virtual element method avoids this issue.
\end{remark}

\section{The lightning approximation}
The key idea behind the lightning VEM is to consider a different local discrete space which is obtained by approximating $V_h^k(K)$ using rational functions with poles clustered exponentially close to the corners of $K$.
For the sake of clarity, we will focus our discussion on the lowest order case, $k=1$. We will discuss in a later section how to deal with the case $k>1$ and the case where higher conformity is required.
Thanks to the linearity of the Laplacian, if $K$ has $N_k$ edges, to construct $V_h^k(K)$ we are interested in solving $N_K$ problems of the form
\begin{align}
    \Delta \phi_i = 0 \;\;&\text{in}\; K,\label{eq:basisProblem}\\
    \phi_i= \varphi_i\;\; & \text{on}\;\partial K.\nonumber
\end{align}
where $\varphi_i$ are the functions in $\mathbb B_1(\partial K)$ that are equal to one on the $i$-th vertex and zero on the others.
If we are working with a structured rectangular mesh we can solve this problem exactly.
As the geometry of $K$ gets more complicated we need to find an efficient way of solving \eqref{eq:basisProblem}.
To address this problem we turn to the lightning Laplace solver presented in \cite{LighteningLaplace,LighteningLaplace2,LightningHelmholtz}.
The idea behind the lightning Laplace method is to search for an approximation $\hat{\phi}_i$ of $\phi_i$ 
 of the form
\begin{equation}
	\hat{\phi}_i = \Re\bigg\{\sum_{j=0}^{N_P}\frac{a_j}{z-z_j}+\sum_{j=0}^{N_Z}b_j(z-z_*)^j\bigg\},\label{eq:lightningFunctions} 
\end{equation} 
where $\{z_j\}_{j=1}^{N_P}$ and $z_*$ are points in the complex plane and $\Re$ denotes the real part of a complex number.
Finding the optimal coefficients, $\{z_j\}_{j=1}^{N_P}$ and $z_*$ to minimize $\norm{\hat{\phi}_i-\phi_i}_{L^\infty(K)}$ in general is a highly non-linear problem.
To transform this non-linear problem into a linear one the position of the points $\{z_j\}_{j=1}^{N_P}$ and $z_*$ are fixed only based upon the geometry of $K$.
In particular, the $N_P$ poles are clustered exponentially closer to the corners of the polygon $K$. Under this hypothesis, the following result was proven in \cite{LighteningStokes}:
\begin{lemma}
	Let $K$ be a convex polygon with corners $w_1,\dots,w_{N_K}$ and let $f$ be an analytic function on $K$ that is analytic on the interior of each side segment.
	Furthermore, we assume that $f$ can be analytically continued to a disk near any corner $w_k$ with a slit along the exterior bisector of the corner.
	Lastly, we assume that at each corner $w_k$ the function $f$ satisfies $f(z)-f(w_k)=\mathcal{O}(\norm{z-w_k}^\delta)$ as $z\to w_k$, for some $\delta>0$.
	Under these assumptions, there exists a rational function $r$ of the form
	\begin{equation}
		r = \Re\bigg\{\sum_{j=0}^{N_P}\frac{a_j}{z-z_j}+\sum_{j=0}^{N_Z}b_j(z-z_*)^j\bigg\},
	\end{equation}
	with $N_P$ poles $z_j$ only at points exponentially clustered along the exterior bisectors at the corners, such that the following approximation bound holds, for $C>0$:
	\begin{equation}
		\norm{f-r}_{L^\infty(K)} \leq C_K e^{-C\sqrt{N_P}}.
	\end{equation}
\end{lemma}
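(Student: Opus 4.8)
The plan is to follow the standard \emph{lightning} template: localise the singular behaviour of $f$ to the corners, reduce each corner contribution to a canonical branch-type singularity, represent that singularity as an integral over a continuum of simple poles, and finally discretise the integral with an exponentially graded quadrature whose nodes become the clustered poles $z_j$. The polynomial term $\sum_j b_j(z-z_*)^j$ will be used to mop up the globally analytic part.

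First I would introduce a smooth partition of unity subordinate to a cover of $K$ by small disks around each corner $w_k$ together with a neighbourhood of the remaining (smooth) part of $K$, and write $f$ as the corresponding sum. The piece supported away from every corner is analytic on a neighbourhood of its support, so by a standard Bernstein-type estimate it is approximable by a polynomial of degree $N_Z$ with geometric (exponential in $N_Z$) accuracy; this supplies the polynomial term in $r$ and is not the bottleneck. Near a fixed corner I would then translate and rotate so that $w_k$ sits at the origin with the exterior bisector along the negative real axis. Using the assumed analytic continuation into the slit disk together with the growth bound $f(z)-f(w_k)=\mathcal O(|z-w_k|^\delta)$, a Cauchy integral on a Hankel-type contour wrapping the slit yields a representation of the local singular part as $\int \frac{g(t)}{z-t}\,dt$, with $t$ ranging over the bisector ray and $g$ controlled by the $\delta$-growth.

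A change of variables $t=-e^{-s}$ that grades the ray exponentially transforms this into an integral over $\mathbb R$ whose integrand is analytic in a horizontal strip and decays exponentially as $s\to\pm\infty$. Applying the trapezoidal rule with step $h$ and truncation length $L$ produces exactly a sum of simple poles $\frac{a_j}{z-z_j}$ with nodes $z_j=-e^{-s_j}$ clustered exponentially towards the corner along the exterior bisector, the residues $a_j$ being the quadrature weights times $g(t_j)$; since the nodes lie on the exterior ray, $r$ is analytic inside $K$ as required.

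The crux of the argument, and the step I expect to be the main obstacle, is the quantitative balance of the two competing quadrature errors. Truncating the real-line integral at $\pm L$ incurs an error of order $e^{-\alpha L}$, while the trapezoidal discretisation on an analyticity strip of width $\beta$ incurs an error of order $e^{-\beta/h}$, and the number of nodes used at the corner is $N\sim L/h$. Optimising, that is equating the two exponents and imposing $L/h=N$, forces $L\sim\sqrt N$ and $1/h\sim\sqrt N$, so that each corner is resolved to accuracy $e^{-C\sqrt N}$; this is precisely where the root-exponential rate is born, and making the strip width and decay rate uniform over the corner data is the delicate part. Summing the $N_K$ corner contributions (a total of $N_P\sim N_K N$ poles) and absorbing the $N_K$-dependence into the constants yields the claimed bound $\norm{f-r}_{L^\infty(K)}\le C_K e^{-C\sqrt{N_P}}$.
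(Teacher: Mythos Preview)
The paper does not actually prove this lemma: it is quoted verbatim as a known result from the lightning-approximation literature (the citation \cite{LighteningStokes}, building on the Gopal--Trefethen analysis in \cite{LighteningLaplace,LighteningLaplace2}). There is therefore no in-paper argument to compare your attempt against; the authors simply import the statement.

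That said, your sketch is essentially the argument one finds in that cited literature: isolate the corner contributions, represent each as a Cauchy/Hankel integral of simple poles along the exterior bisector, discretise by an exponentially graded trapezoidal rule, and balance the truncation error $e^{-\alpha L}$ against the discretisation error $e^{-\beta/h}$ under the constraint $N\sim L/h$ to force $L,1/h\sim\sqrt{N}$. That is the correct template and the correct place for the square root to appear.

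One genuine caution about your write-up: invoking a ``smooth partition of unity'' is hazardous here. Multiplying $f$ by a merely $C^\infty$ cutoff destroys analyticity, and then the Bernstein-type polynomial approximation you appeal to for the away-from-corners piece no longer holds (a $C^\infty$ but non-analytic function admits no geometric polynomial approximation rate). In the actual Gopal--Trefethen proof the decomposition is carried out \emph{analytically}, by splitting a Cauchy integral over $\partial K$ into subarcs near each corner plus a residual arc, so that every piece remains analytic on the region where it must be approximated. If you replace your partition-of-unity step by this contour-based splitting, the remainder of your outline goes through as written.
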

The previous lemma is a strong indication that the lightning Laplace scheme will be able to converge to an extremely accurate solution, yet it can not be applied in any straightforward manner to derive a priori error estimates with respect to the $H^1$ and $L^2$ norms.
An idea might be to mimic the reasoning presented in \cite{Yuji}, which can be used to derive an a priori error estimate on a least squares collocation method in terms of Lebesgue constant and the best approximation estimate presented above. The reason why this is not a viable path is that we are interested here in $H^1$ error estimates, which can not be produced using the type of $L^\infty$ bound presented in \cite{Yuji}.
To produce a priori error estimates for the overall scheme, we decide to proceed adaptively when it comes to the resolution of \eqref{eq:basisProblem} using the lightning Laplace method.
The exact algorithm used to compute the solution $\hat{\phi}_i$ to \eqref{eq:basisProblem} is presented in Algorithm \ref{alg:lightningLaplace} and we once again direct the reader interested in more detail to \cite{LighteningLaplace}.
\begin{algorithm}[h]
\caption{
	For computing the solution $\hat{\phi}_i^{(e)}$ of \eqref{eq:basisProblem}.}
	\label{alg:lightningLaplace}
	\begin{algorithmic}[1]
		\Require{$n_1,\;n_2,\;\dots$ such that $\sqrt{n_1},\;\sqrt{n_2},\;\dots$ is uniformly distributed, a tolerance $\varepsilon$.}
		\For{$n = n_1,n_2,\dots$}
			\State{Fix $N_P \approx nN_K$ and cluster the poles outside $K$ and exponentially close the corners;}
			\State{Fix $z_*$ in the interior of $K$ and fix $N_Z \approx N_K$ elements of the monomial basis;}
			\State{Choose $M=6N_P+6N_Z+1$ sample points on the boundary and clustered near the corners;}
			\State{Evaluate at the sample points $\varphi_i$ to obtain a matrix $A\in \mathbb{R}^{M\times (2N_P+2N_Z+1)}$ and $\vec{d}\in \mathbb{R}^{M}$;}
			\State{Solve the least squares problem $A\vec{x} \approx \vec{d}$ for the coefficients $\vec{x}=(\vec{a},\vec{b})$;}
			\State{Construct the function $\hat{\phi}_i = \Re\bigg\{\sum_{j=0}^{N_P}\frac{a_j}{z-z_j}+\sum_{j=0}^{N_Z}b_j(z-z_*)^j\bigg\}$ using the coefficients $\vec{a}$ and $\vec{b}$;}
			\If{$\norm{\hat{\phi}_i-\varphi_i}_{H^{1/2}({\partial K})}\leq \varepsilon$}
				\State{\textbf{break;}}
			\EndIf
		\EndFor
		\Return{$\hat{\phi}_i$.}
	\end{algorithmic}
\end{algorithm}
We then observe that since  $\sum_{j=0}^{N_P}\frac{a_j}{z-z_j}+\sum_{j=0}^{N_Z}b_j(z-z_*)^j$ is analytic in the convex hull described by the vertex of any polygonal element $K$ of the tessellation $\mathcal{T}_h$ where $z_j$ are the vertices of $K$, then its real part is a harmonic function.
We direct the reader unfamiliar with this notion to \cite{Gilardi,Ahlfors}.
Since $\hat{\phi}_i$ is harmonic we know that the function defined as $\hat{\phi}_i-\phi_i$ is harmonic and by applying  boundary regularity estimates for harmonic functions we obtain the following result.
\begin{proposition}
	Let $\phi_i$ be as in \eqref{eq:basisProblem} and $\hat{\phi}_i$ be the outcome of Algorithm \ref{alg:lightningLaplace}. Then 
	\begin{equation}
		\norm{\hat{\phi}_i-\phi_i}_{H^1(K)}\leq C_1\varepsilon, \qquad \norm{\hat{\phi}_i -\phi_i}_{L^{\infty}(\partial K)}\leq C_2\varepsilon,\label{eq:lightningErrorEstimates}
	\end{equation}
	where the constants $C_1$ and $C_2$ only depend on $\Omega$ and $\partial\Omega$.
\end{proposition}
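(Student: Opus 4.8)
The plan is to lean entirely on the one structural fact emphasised just before the statement: both $\phi_i$ and $\hat{\phi}_i$ are harmonic, so their difference $w \coloneqq \hat{\phi}_i - \phi_i$ is harmonic in $K$ and is completely determined by its boundary values. Since $\phi_i = \varphi_i$ on $\partial K$ by \eqref{eq:basisProblem}, the trace of $w$ is exactly $w|_{\partial K} = \hat{\phi}_i - \varphi_i$, and the stopping criterion of Algorithm \ref{alg:lightningLaplace} guarantees $\norm{\hat{\phi}_i - \varphi_i}_{H^{1/2}(\partial K)} \le \varepsilon$. Thus $w$ is the harmonic extension of a boundary datum $g \coloneqq \hat{\phi}_i - \varphi_i$ that is small in $H^{1/2}(\partial K)$, and the whole proposition reduces to two continuity estimates for the harmonic-extension map in terms of $\norm{g}_{H^{1/2}(\partial K)}$.

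First I would establish the $H^1(K)$ bound, which is the clean half. The harmonic-extension operator is bounded from $H^{1/2}(\partial K)$ into $H^1(K)$: choosing any lifting $Eg \in H^1(K)$ of $g$ with $\norm{Eg}_{H^1(K)} \le C\,\norm{g}_{H^{1/2}(\partial K)}$ (a bounded right inverse of the trace operator on the Lipschitz domain $K$), using that the harmonic $w$ minimises the Dirichlet energy among all competitors sharing its trace so that $|w|_{1,K} \le |Eg|_{1,K}$, and closing the full-norm estimate with a Poincar\'e and trace inequality on $K$, one obtains $\norm{w}_{H^1(K)} \le C_1\,\norm{g}_{H^{1/2}(\partial K)} \le C_1\,\varepsilon$. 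To make $C_1$ independent of the individual element I would rescale $K$ to unit diameter and invoke the shape-regularity hypothesis (star-shapedness with respect to a ball of radius $\ge \rho\,h_K$), so that the stability constant is uniform over $\mathcal{T}_h$ and hence depends only on $\rho$ and the geometry of $\Omega$.

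The boundary $L^\infty$ bound is where I expect the real difficulty. The trace identity already reduces it to $\norm{w}_{L^\infty(\partial K)} = \norm{g}_{L^\infty(\partial K)}$, so one must upgrade the $H^{1/2}(\partial K)$-smallness of $g$ to $L^\infty(\partial K)$-smallness; the obstacle is that $H^{1/2}$ sits exactly at the critical Sobolev exponent on the one-dimensional set $\partial K$ and therefore does not embed into $L^\infty$, and in fact no bound of the form $\norm{w}_{L^\infty(\partial K)} \le C\,\norm{w}_{H^1(K)}$ can hold for harmonic functions. The resolution I would pursue is to exploit the extra regularity that the lightning construction provides on the boundary: on the interior of each edge $e$ the approximant $\hat{\phi}_i$ is the restriction to a straight segment of a function analytic there (its poles lie off $K$) while $\varphi_i$ is affine, so $g|_e$ is real-analytic; interpolating this fixed higher regularity against the $H^{1/2}$-smallness, edge by edge, and using the embedding $H^{s}(e) \hookrightarrow L^\infty(e)$ for some $s>1/2$, one would arrive at $\norm{g}_{L^\infty(\partial K)} \le C_2\,\varepsilon$, with $C_2$ again rendered element-uniform by the scaling argument above. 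The delicate point to be watched here, and the main obstacle of the proof, is to carry out this upgrade while keeping the constant independent of the number of poles, controlling the edge-wise regularity uniformly near the corners where the poles of $\hat{\phi}_i$ cluster, and recovering the full linear dependence on $\varepsilon$ asserted in \eqref{eq:lightningErrorEstimates} rather than a weaker fractional power; it is precisely this that forces the use of the edge-wise analyticity rather than a soft embedding argument.
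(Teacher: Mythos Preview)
Your treatment of the $H^1(K)$ bound is essentially the paper's: the paper simply cites boundary-regularity results (Grisvard, Gilbarg--Trudinger) for the continuity of the harmonic extension $H^{1/2}(\partial K)\to H^1(K)$, which is exactly the lifting/energy-minimisation argument you write out in detail, together with your scaling remark to make the constant element-uniform.

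For the $L^\infty(\partial K)$ bound the two approaches differ. The paper disposes of it in one sentence: ``The second estimate comes from Morrey's inequality on a parametrization of the boundary,'' i.e.\ it treats the step as a direct embedding of $H^{1/2}$ of a one-dimensional curve into $L^\infty$. You have, correctly, observed that $H^{1/2}$ sits at the critical Sobolev exponent in one dimension and does \emph{not} embed into $L^\infty$, so Morrey's inequality alone cannot close the argument. Your route via edge-wise analyticity of $\hat{\phi}_i$ and interpolation to a space $H^s(e)$ with $s>1/2$ is therefore not an alternative proof of the same step but an attempt to repair a gap the paper glosses over. What the paper's version buys is brevity; what yours buys is an honest argument, at the cost of the uniformity-in-$N_P$ and corner-behaviour issues you yourself flag as the remaining obstacle.
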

\begin{proof}
	Since we assumed the elements $K$ of our tessellation $\mathcal{T}_h$ are polygonal, proof for the boundary estimates used to produce the first inequality in \eqref{eq:lightningErrorEstimates} can be found in \cite{Grisvard,GilbargTrudinger}.
	The second estimate comes from the Morrey's inequality on a parametrization of the boundary \cite{Evans}.
\end{proof}
From now on we will adopt the notation $\hat{V}_{h,\varepsilon}(K)$ to express the discrete function space that is obtained from the solution of \eqref{eq:basisProblem} using the lightning Lapalce method on the element $K$ of the tessellation and $\hat{V}_{h,\varepsilon}(\mathcal{T}_h)$ to denote the global space constructed starting from the various $\hat{V}_{h,\varepsilon}(K)$.
\begin{remark}
    It is clear that a fundamental step in the lightning VEM is solving the least squares problem
    \begin{equation}
        A\vec{x}\approx \vec{d}\label{eq:leastSquare}
    \end{equation}that appears in Algorithm \ref{alg:lightningLaplace}. In particular the least squares problem \eqref{eq:leastSquare} is observed to be terribly ill-conditioned. Yet is still possible to solve \eqref{eq:leastSquare} with great accuracy if standard regularisation are adopted \cite{LighteningLaplace}. In particular in \cite{Herremans2023} it is discussed the effect that the pole at infinity in \eqref{eq:lightningFunctions} have on the $2$-norm of $\vec{x}$. In fact bounding the $2$-norm of $\vec{x}$ will guarantee the solvability of \eqref{eq:leastSquare} to a high degree of accuracy in spite its ill-conditioned nature. An other route to solve \eqref{eq:leastSquare} to a great deal of accuracy, inspite its ill-conditioned nature, is to use the Vandermonde with Arnoldi algorithm, introduced in \cite{VA} as discussed in \cite{Yuji,LighteningStokes}.
\end{remark}
\section{The lightning virtual element method}

We begin by observing that the degrees of freedom $\mathcal{V}_h$ and $\mathcal{E}_h$ are not decoupled in adjacent elements.
In fact if $\hat{\phi}_i$ had been a polynomial on the edges of the element $K$ as in the standard VEM space $V_h(K)$, then this would have ensured the continuity of the function $\hat{\phi}_i$ across the elements interface.
Instead, since each $\hat{\phi}_i$ is a rational function on the edges of the element $K$ even if over adjacent elements $\hat{\phi}_i$ would have the same value on the edges and vertex degrees of freedom we still might have a jump across the elements interface, as depicted Figure \ref{fig:continuity}.
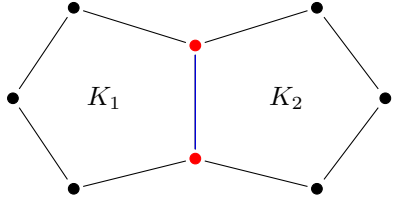
\begin{figure}
    \centering
    \caption{We know that the basis function $\hat{\phi}_{i,K_1}$ and $\hat{\phi}_{i,K_2}$ corresponding to the $i$-th vertex and constructed respectively on $K_1$ and $K_2$, match at the degrees of freedom here denoted in red. Yet we have no guarantee that $\hat{\phi}_1$ and $\hat{\phi}_2$ are continuous along the blue edge.}
    \vspace{1cm}
    \label{fig:continuity}
    \begin{tikzpicture}[scale = 0.4,rotate=90]
        \node  (A) at (0, 6) {};
        \node  (B) at (-3, 4) {};
        \node  (C) at (-2, 0) {};
	\node  (D) at (1.75, 0) {};
	\node  (E) at (3, 4) {};
	\node  (F) at (-3, -4) {};
	\node  (G) at (3, -4) {};
	\node  (H) at (0, -6.25) {};
    \node  (C1) at (0, 3) {$K_1$};
    \node (C2) at (0,-3) {$K_2$};
        
        \draw (A) -- (B) -- (C) -- (D) -- (E) -- (A);
        \draw (C) -- (F) -- (H) -- (G) -- (D) -- (C);
        \draw [blue] (C) -- (D) ;
        
        \draw[black, fill] (A) circle (5pt);
        \draw[black, fill] (B) circle (5pt);
        \draw[red, fill] (C) circle (5pt);
        \draw[red, fill] (D) circle (5pt);
        \draw[black, fill] (E) circle (5pt);
        \draw[black, fill] (F) circle (5pt);
        \draw[black, fill] (G) circle (5pt);
        \draw[black, fill] (H) circle (5pt);
    \end{tikzpicture} 
\end{figure}
It is important to notice that this jump is bounded by adaptively solving the lightning approximation problem in Algorithm \ref{alg:lightningLaplace}, i.e. $\norm{\hat{\phi}_i-\phi_i}_{H^{1/2}({\partial K})}\leq \varepsilon$.
Since the function in $\hat{V}_{h,\varepsilon}(\mathcal{T}_h)$ are no longer continuous we need to consider a broken version of the bilinear form $a$ in \eqref{eq:a-def}, i.e.
\begin{equation}
	\hat{a}(\hat{u}_h,\hat{v}_h) = \sum_{K\in \mathcal{T}_h} a^K(\hat{u}_h,\hat{v}_h)
\end{equation}
This bilinear form on $H^1(\Omega)$ is still symmetric positive definite with respect to the broken $H^1(\Omega)$ norm and for the continuous solution to the problem we have that $\hat{a}(u,v)=a(u,v)$ since $u,v\in H^1(\Omega)$.
Furthermore, in $\hat{V}_{h,\varepsilon}(\mathcal{T}_h)$ the kernel of the bilinear form $\hat{a}(\cdot,\cdot)$ are only functions that are constant on each element of the tessellation, using the fact that such constants must have the same value on the vertices of neighboring elements we have that the boundary conditions impose that $\hat{a}(\cdot,\cdot)$ has a trivial kernel.
We now derive an a priori error estimate using the following Lemma from \cite{BrennerFEM}.
\begin{lemma}
    Let $\hat{a}(\cdot,\cdot)$ be a symmetric positive definite bilinear form on $H^1(\Omega)+\hat{V}_{h,\varepsilon}(\mathcal{T}_h)$ which reduces to $a(\cdot,\cdot)$ on $H^1(\Omega)$.
    We will further assume that $u$ is the solution of \eqref{eq:lap-weak} and $\hat{u}_h$ is such that:
    \begin{equation}
        \hat {a} (\hat{u}_h,\hat{v}_h) = \langle f,\hat{v}_h\rangle \textrm{ for all } \hat{v}_h \in \hat{V}_{h,\varepsilon}(\mathcal{T}_h),
    \end{equation}
    where in both the previous variation equation and \eqref{eq:lap-weak} we assume $f\in L^2(\Omega)\cap \hat{V}_{h,\varepsilon}(\mathcal{T}_h)^*$.
    Under this hypothesis the following best approximation estimate holds,
    \begin{equation}
        \norm{u-\hat{u}_h}_h \leq \underset{\hat{v}_h \in \hat{V}_{h,\varepsilon}}{\inf}\norm{u-\hat{v}_h}_h + \underset{\hat{w}_h \in \hat{V}_{h,\varepsilon}}{\sup}\frac{\abs{\hat{a}(u-\hat{u}_h,\hat{w}_h)}}{\norm{\hat{w_h}}_h},\label{eq:StrangLemma}
    \end{equation}
    where $\norm{\cdot}_h := \sqrt{\hat a (\cdot,\cdot)}$.
\end{lemma}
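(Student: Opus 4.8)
The plan is to recognise the statement as an abstract Hilbert-space projection argument. By hypothesis $\hat a(\cdot,\cdot)$ is symmetric positive definite on $H^1(\Omega)+\hat V_{h,\varepsilon}(\mathcal T_h)$, so it is a genuine inner product on that space and $\norm{\cdot}_h$ is the norm it induces; in particular the Cauchy--Schwarz inequality $\hat a(v,w)\le\norm{v}_h\norm{w}_h$ is available, and the $\hat a$-orthogonal projection $P_h u$ of $u$ onto the finite-dimensional subspace $\hat V_{h,\varepsilon}(\mathcal T_h)$ is well defined. Introducing this projection, rather than estimating with an arbitrary $\hat v_h$ and the triangle inequality alone, is the device that produces the sharp constant $1$ in front of the best-approximation term and avoids a spurious factor of two.

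First I would split the error through the projection,
\begin{equation*}
\norm{u-\hat u_h}_h\le\norm{u-P_h u}_h+\norm{P_h u-\hat u_h}_h,
\end{equation*}
and note that the first summand is exactly the best-approximation error, because by the variational characterisation of the orthogonal projection $\norm{u-P_h u}_h=\inf_{\hat v_h\in\hat V_{h,\varepsilon}}\norm{u-\hat v_h}_h$.

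Next I would bound the second summand by the consistency term. The function $P_h u-\hat u_h$ lies in $\hat V_{h,\varepsilon}(\mathcal T_h)$, hence is an admissible test function, and for every $\hat w_h\in\hat V_{h,\varepsilon}(\mathcal T_h)$ the identity
\begin{equation*}
\hat a(P_h u-\hat u_h,\hat w_h)=\hat a(u,\hat w_h)-\langle f,\hat w_h\rangle=\hat a(u-\hat u_h,\hat w_h)
\end{equation*}
holds: the first equality combines $\hat a(P_h u,\hat w_h)=\hat a(u,\hat w_h)$ (orthogonality of the projection) with the discrete equation $\hat a(\hat u_h,\hat w_h)=\langle f,\hat w_h\rangle$, and the second uses the discrete equation once more. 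Testing with $\hat w_h=P_h u-\hat u_h$ and invoking the definition of the supremum gives
\begin{equation*}
\norm{P_h u-\hat u_h}_h^2=\hat a(u-\hat u_h,P_h u-\hat u_h)\le\Big(\sup_{\hat w_h\in\hat V_{h,\varepsilon}}\frac{\abs{\hat a(u-\hat u_h,\hat w_h)}}{\norm{\hat w_h}_h}\Big)\norm{P_h u-\hat u_h}_h,
\end{equation*}
so that dividing by $\norm{P_h u-\hat u_h}_h$ bounds it precisely by the supremum. Substituting both bounds into the triangle inequality yields \eqref{eq:StrangLemma}.

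I expect the only real subtleties, rather than a genuine obstacle, to be two points of hygiene. The first is ensuring $\hat a$ really is an inner product on the sum space, so that $P_h$ exists and Cauchy--Schwarz applies; this is exactly the positive-definiteness assumed in the statement, which in the concrete setting rests on the triviality of the kernel of $\hat a$ on $\hat V_{h,\varepsilon}(\mathcal T_h)$ enforced by the boundary conditions. The second is the slightly unusual consistency term, measured at $u-\hat u_h$ rather than at $u$ alone: one must observe that, through the discrete equation, $\hat a(u-\hat u_h,\hat w_h)=\hat a(u,\hat w_h)-\langle f,\hat w_h\rangle$ is a genuine variational-crime residual, finite precisely because $f\in L^2(\Omega)\cap\hat V_{h,\varepsilon}(\mathcal T_h)^*$, and that it is this quantity, and not the unknown solution itself, that subsequently has to be estimated in terms of the interface jumps controlled by $\varepsilon$.
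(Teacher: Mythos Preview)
Your proof is correct and is the standard orthogonal-projection argument for the second Strang lemma in the symmetric case. The paper, however, does not prove this lemma at all: it is quoted verbatim from \cite{BrennerFEM} and invoked as a black box, so there is no ``paper's own proof'' to compare against. Your argument is essentially the one found in that reference.
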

To apply the previous lemma, in order to obtain an a priori error estimate, we need first to provide an estimate for the second term in the right-hand side of \eqref{eq:StrangLemma}.
\begin{align}
    \hat a (u-\hat{u}_h,\hat{w}_h )&=
    \sum_{K\in \mathcal{T}_h} (\nabla u, \nabla\hat{w}_h)_{L^2(\Omega)}-(f, \hat{w}_h)_{L^2(\Omega)}\\
    &= \sum_{K\in \mathcal{T}_h} \int_{\partial K} (\nabla u \cdot \vec{n})  \hat{w}_h = \sum_{e \in \mathscr{E}_h} \int_e \nabla u \cdot \llbracket \hat{w}_h \rrbracket \, ,
\end{align}
where $\mathscr{E}_h$ are the internal edges of the tessellation $\mathcal{T}_h$ and $\llbracket \hat{w}_h \rrbracket$ denotes the vector jump of $\hat w_h$ across the edge $e$, i.e. 
\begin{equation}
\llbracket \hat{w}_h \rrbracket = \hat w_h\big\lvert_{K_1} \vec{n}_1 + \hat w_h\big\lvert_{K_2} \vec{n}_2 \, .
\end{equation}
We now rewrite the last term using the basis functions $\hat{\phi}_i \big \lvert_e$, 
\begin{equation}
    \sum_{e \in \mathscr{E}_h} \int_e \nabla u \cdot \llbracket \hat{w}_h \rrbracket
    \leq
    \sum_{e \in \mathscr{E}_h} \sum_{i=1}^{N_K} \int_e \hat{w}_i \big \lvert_e \nabla u \cdot \llbracket \hat{\phi}_i\rrbracket\leq \sum_{K\in \mathcal{T}_h} \varepsilon \norm{\hat{w}_h}_{h,K}\abs{u}_{H^2(K)},
\end{equation}
where the last inequality comes from the trace theorem combined with Cauchy-Schwarz inequality and the fact that we have constructed the basis function $\hat{\phi}^{(e)}_i$ in such a way that $\llbracket \hat{\phi}^{(e)}_i \rrbracket \leq  \hat{C}\varepsilon$, where $\hat{C}$ only depends on the shape of all the elements in the tessellation but not on their size.
Therefore assuming that each element in the tessellation is convex, so  that we can use the usual elliptic regularity result, we rewrite \eqref{eq:StrangLemma} as
\begin{equation}
    \norm{u-\hat{u}_h}_h \leq \!\!\underset{\hat{v}_h \in \hat{V}_{h,\varepsilon}}{\inf}\norm{u-\hat{v}_h}_h + \sum_{K\in \mathcal{T}_h} \hat{C}\varepsilon \norm{f}_{L^2(K)}\leq  \!\!\underset{\hat{v}_h \in \hat{V}_{h,\varepsilon}}{\inf}\norm{u-\hat{v}_h}_h + \hat{C}\varepsilon \norm{f}_{L^2(\Omega)}.
\end{equation}
We are therefore left estimating the best approximation property for objects that are in $\hat{V}_{h,\varepsilon}(\mathcal{T}_h)$.
If we denote $\Pi_k u$ the standard VEM interpolant, \cite{VEMBasic}, then we can write $\Pi_k u$ in terms of the basis functions $\phi_i$, i.e. $\Pi_k u = \sum_{K\in\mathcal{T}_h}\sum_{i=1}^{N_K} u_i \phi_i$ and introduce $\hat{\Pi}_k u = \sum_{K\in\mathcal{T}_h}\sum_{i=1}^{N_K} u_i \hat{\phi}_i$.
\begin{align}
    &\norm {\Pi_k u -\hat{\Pi}_j u }_h \leq \sum_{K\in \mathcal{T}_h}\sum_{i=1}^{N_K} \abs{u_i}\norm {\phi_i -\hat{\phi}_i}_{H^1(K)} \leq \sum_{K\in \mathcal{T}_h}\sum_{i=1}^{N_K} \abs{u_i} \hat{C}\varepsilon \leq \norm{f}_{L^2(\Omega)}\hat{C}\varepsilon
\end{align}
Therefore using the previous estimate together with the approximation estimates of the standard VEM interpolant we obtain
\begin{align}
    \norm{u-\hat{\Pi}_ku}_h&\leq \norm{u-{\Pi}_ku}_h+\norm{\Pi_k u-\hat{\Pi}_ku}_h \\  &\leq C(\Omega)h^{\max\{k,m-1\}} \abs{u}_{H^m(\Omega)}+\norm{f}_{L^2(\Omega)}\hat{C}\varepsilon.
\end{align}
Substituting this last estimate in the infimum appearing in the right hand side of \eqref{eq:StrangLemma}, we get
\begin{equation}
    \norm{u-\hat{u}_h}_h \leq C(\Omega)h^{\max\{k,m-1\}} \abs{u}_{H^m(\Omega)}+\norm{f}_{L^2(\Omega)}\hat{C}\varepsilon.
\end{equation}
\section{Numerical experiments} To investigate the behavior of the method, we consider a family of model problems in the unit square $\Omega = [0,1] \times [0,1]$ with as analytic solution the function
\begin{equation}
u(x,y) \coloneqq \sin(\pi x) \sin( \pi y) + \log(1 + xy) \, .
\end{equation} 
The meshes $\mathcal{T}_h$ that we will consider are centroidal Voronoi tessellations of the unit square. An example of a Voronoi tesselation is given in Figure \ref{fig:Voroni}.

\begin{figure}[H]\label{fig:Voroni}
\centering
\includegraphics[scale=0.5]{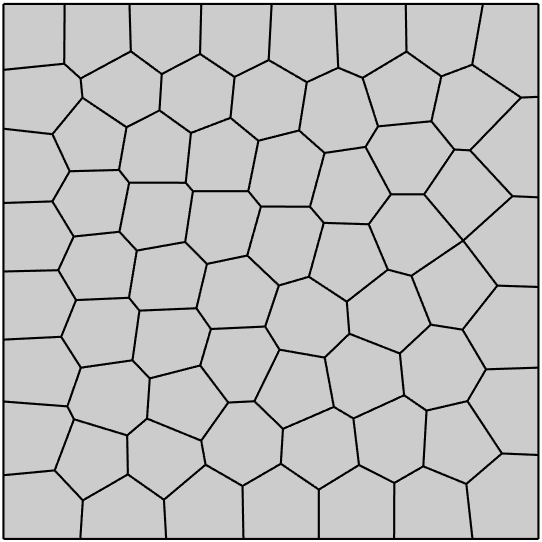}
\caption{A Voronoi tesselation made by 64 polygons.}
\label{fig:enter-label}
\end{figure}

Before presenting the cases of interest and the numerical results, we point out that in the usual VEM framework, the errors in the $L^2-$norm and $H^1-$seminorm are replaced by the following quantities:
\begin{equation}
e_{H^1} := \sqrt{\sum_{E\in\mathcal{T}_h}\left\|\nabla(u-\Pi_k^\nabla u_h)\right\|^2_{0,E}}\, \qquad
e_{L^2} := \sqrt{\sum_{E\in\mathcal{T}_h}\left\|(u-\Pi_k^\nabla u_h)\right\|^2_{0,E}}\,.
\end{equation}

This choice is due to not knowing the analytic expression of the virtual functions. Using lighting approximation, we can access the pointwise values of the approximated virtual element functions. This permits to compute the local errors using a quadrature formula without projecting into the space of polynomials. We therefore use the usual definitions of the $H^1$ and  $L^2$ errors.
The source code to reproduce all the numerical experiment presented in this section can be found in \cite{GitHub}.

\paragraph{The Laplace problem} As first model problem, we consider the PDE
\begin{equation}
\begin{aligned}
    - \Delta u  &= f \quad \text{in }\Omega \, ,\\
    u &\equiv g \quad \text{on } \partial \Omega \, .
  \end{aligned}
\end{equation}
We start from this equation since it represents the simplest elliptic PDE that one can consider. Considering a sequence of Voronoi tesselations that quadruples the number of polygons, we obtain the orders of convergence represented in Figure \ref{fig:caso1}, with $k=1$.

\begin{figure}
\centering
\includegraphics[scale=0.5]{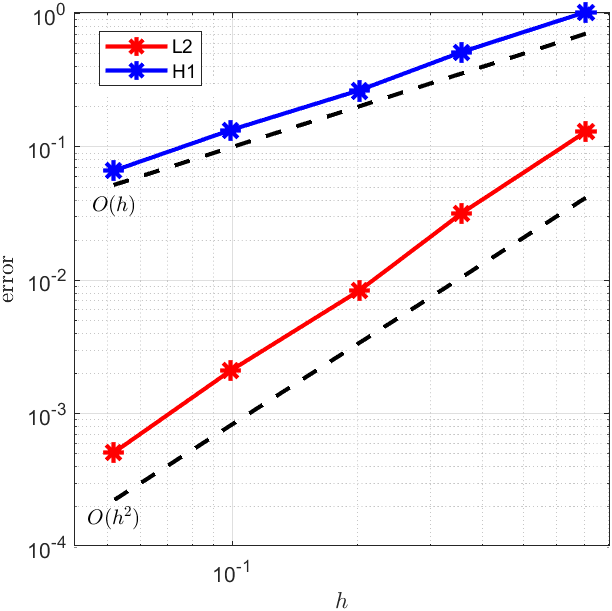}
\caption{Convergence results for the Laplace problem.}
\label{fig:caso1}
\end{figure}
We observe that we have achieved the expected order of convergence for this equation. 
In particular, the error in $L^2$-norm and $H^1-$seminorm decay as $\mathcal{O}(h^2)$ and $\mathcal{O}(h)$, respectively.

\paragraph{The diffusion-reaction problem} We add to the previous equation a reaction term. We obtain the following PDE
\begin{equation}
\begin{aligned}
    - \epsilon \Delta u  + \gamma u &= f \quad \text{in }\Omega \, ,\\
    u &\equiv g \quad \text{on } \partial \Omega \, ,
  \end{aligned}
\end{equation}
where $\epsilon>0$ and $\gamma \in L^\infty(\Omega)$ is a bounded non-negative function. This problem is of interest because the usual VEM approach to discretising the $L^2-$scalar product is to construct the $L^2-$orthogonal projection operator $\Pi^{0,E}_k: V_h^k(E) \to \mathbb P_k(E)$. This is not possibile with the standard definition of the virtual element space. To overcome this difficulty, the idea is to use the enhanced virtual element space defined as
\begin{equation}
\label{eq:enhanced}
\begin{aligned}
\tilde V_h(E) = 
\bigl\{
v_h \in H^1(E) \cap  C^0(\partial E) \quad \text{:} \quad 
 v_h|_e  \in \mathbb P_k (e) \quad &\text{for all $e \in \partial E$,} 
\bigr .
\\
\bigl .
 \Delta v_h \in \mathbb P_k(E) \,, \,
\langle v - \Pi^{\nabla,E}_k v, \, \widehat{p}_k \rangle = 0 \quad 
&\text{for all $\widehat{p}_k \in \mathbb P_k(E) / \mathbb P_{k-2}(E)$}
\bigr \} \,.
\end{aligned}
\end{equation}

Thanks to the lighting approximation, we do not need to project the virtual functions into the space of polynomials. This implies that we do not have to change the definition of the discrete space. This gives benefits also from the theoretical point of view.
For the numerical tests, we set $\epsilon = \gamma = 1$. The results are shown in Figure \ref{fig:caso2} and we observe that also for this equation we recovered the expected order of convergence.

\begin{figure}
\centering
\includegraphics[scale=0.5]{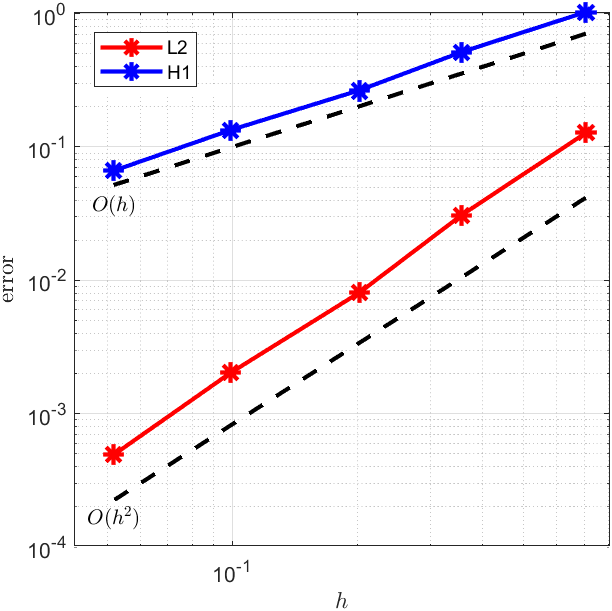}
\caption{Convergence results for the diffusion-reaction problem.}
\label{fig:caso2}
\end{figure}

\paragraph{The advection-diffusion-reaction problem} As the last model problem, we consider the advection-diffusion-reaction problem given by
\begin{equation}
\begin{aligned}
    - \epsilon \Delta u  + \beta \cdot \nabla u + \gamma u &= f \quad \text{in }\Omega \, ,\\
    u &\equiv g \quad \text{on } \partial \Omega \, .
  \end{aligned}
\end{equation}
where $\epsilon$ and $\gamma$ are as in the previous case and $\beta \in [W^{1,\infty}(\Omega)]^2$ with $\text{div} \beta = 0$. We point out that the bilinear form associated to the advective field
\begin{equation}
b(u,v) \coloneqq \int_\Omega (\beta \cdot \nabla u ) \, v {\rm d} \Omega \, ,
\end{equation}
is skew-symmetric. When we discretise $b(\cdot, \cdot)$ by inserting the projections onto the polynomial space, we lose this property.
Instead, to preserve the skew-symmetry property, we discrtize
\begin{equation}
b^{\text{skew}}(u,v) \coloneqq \dfrac{1}{2} \bigl( b(u,v) - b(v,u) \bigr) \, . \label{eq:skew}
\end{equation}
Using the lighting approximation, we avoid this difficulty and do not require \eqref{eq:skew}.
We select the same solution of the previous case and we set
$$
\beta(x,\,y) := \left[\begin{array}{r}
-2\,\pi\,\sin(\pi\,(x+2\,y))\\
\pi\,\sin(\pi\,(x+2\,y))
\end{array}\right]\, ,
$$
with $\epsilon = \sigma = 1$. To overcome problems related to the hyperbolicity of the advection term, we have assumed that we are not in an advection-dominated regime.
The numerical results are represented in Figure \ref{fig:caso3}. In Table \ref{tab:Time} we compare the average local assembly time between a vanilla VEM implementation and the lightning VEM method, clearly as the  mesh becomes finer the lightning VEM method is outperformed by the standard VEM method because we need to solve \eqref{eq:basisProblem} to a greater accuracy.
\begin{figure}
\centering
\includegraphics[scale=0.25]{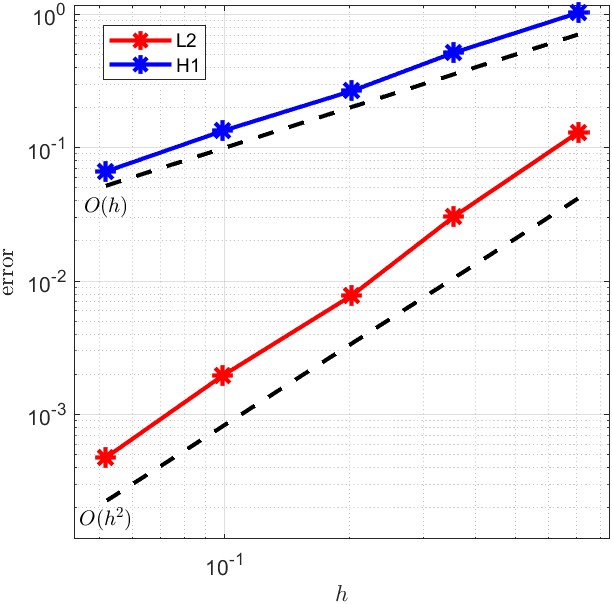}
\caption{Convergence results for the advection-diffusion-reaction problem}
\label{fig:caso3}
\end{figure}
\begin{table}[h]
    \caption{A comparison between a vanilla VEM implementation and the lightning VEM implementation, of the average time (in seconds) taken by the assembly of the local matrix for different number of elements.}\label{tab:Time}
    \begin{tabular*}{\textwidth}{@{\extracolsep\fill}l|ccccc}
    \toprule%
    N & 4 & 16 & 64 & 256 & 1024 \\
    \midrule
    Vanilla VEM&4.613250e-03 & 2.031250e-03 & 2.201594e-03 & 1.102801e-03 & 1.036618e-03 \\
    Lightning VEM & 3.679250e-03 & 3.221188e-03 & 6.074953e-03 & 9.153375e-03 & 1.845604e-02 \\
    \botrule
    \end{tabular*}
\end{table}

\section{Extensions of the lightning virtual element method}
As seen in the literature about the VEM it is possible to generalise a vast variety of ideas developed on the FEM framework also the lightning VEM.
In this section we would like to detail some of these extensions that we plan to investigate in the near future.
The authors would like to point out that most of these extensions are only possible thanks to the large body of work on the lightening and AAA approximation by Nick Trefethen, Yuji Nakatsukasa and colleagues.
\begin{enumerate}
	\item \textbf{High order discretisation}, we have focused our attention on the lowest order VEM, i.e. the one that has only degree of freedom the nodal evaluation in the vertices of our mesh.
	We would like to point out that extending the lightning VEM idea to higher order approximation is an easy task. In particular, it is enough to consider as \eqref{eq:discrete-space} a modified version similar to the lowest degree case, i.e.
	\begin{equation}
		V_h^k(K) = \Big\{u \in C^2(K)\;:\; u_{|e} \in \mathbb{P}_{k}(e)\; \text{and} \; \Delta (u-\pi) = 0\Big\},
	\end{equation}
	where $\pi\in \mathbb{P}_{k}$ is determined, in a non-unique manner by the internal moment degrees of freedom.
	\item \textbf{$C^k$ discretisation}, as mentioned in Reamark \ref{rmk:polyLaplacian} it is possible to obtain discretisation with a high degree of conformity susbtituting \eqref{eq:discrete-space} with
	\begin{align}
		V_h^k(K) = \Big\{u \in C^{k+2}(K)\;:\; 
        \partial_n^j u_{|e} \in \mathbb{P}_{2k-j}(e), \;\;
        &u_{|e} \in \mathbb{P}_{2k-1}(e),\;\; \Delta^k u = 0,\;\; \\&j=2,\dots, p\Big\}.
	\end{align}
	In this case we can still proceed constructing a set of basis functions in the same spirit as the lightening VEM method but following \cite{LighteningStokes}.
	\item \textbf{Curved elements}, a careful reader might have notice that an essential requirement for the lightning virtual element method is that the tessellation $\mathcal{T}_h$ is made by polygonal elements.
	In fact in order to transform the non-linear problem of fitting \eqref{eq:lightningFunctions} in a linear one we made the a priori choice to cluster the poles of \eqref{eq:lightningFunctions} exponentially close to the corner of the polygonal elements.
	How do we choose the position of the poles if we have a curved element? This question has been addressed in \cite{AAA} and we plan to use a similar reasoning to extend the lightning virtual element method to tessellations with curved elements.
	\item \textbf{Eigenvalue problems}, it has been observed in \cite{EigReview,EigParameterDep} that the stabilisation term plays a harmful role in the discretisation of eigenvalue problems using a virtual element discretisation. Possible fixes have been proposed in \cite{EigLowOrder,Gardini}.
	We plan to study the role that the absence of a stabilisation term plays when discretising an eigenvalue problem using the lightning virtual element method.
\end{enumerate}

\section*{Acknowledgments}
We would like to acknowledge Astrid Herremans, Nick Trefethen, Yuji Nakatsukasa and Patrick E. Farrell for the invaluable conversation and continuous help while writing this paper.

\bibliography{sn-bibliography}% common bib file

\end{document}